\documentclass[11pt]{amsart}
\usepackage{lmodern}
\usepackage[a4paper,margin=3cm]{geometry}
\usepackage{amsmath,amssymb}
\usepackage[utf8]{inputenc}
\usepackage[T1]{fontenc}
\usepackage[english]{babel}
\usepackage{graphicx}
\usepackage{xcolor}
\usepackage[all,cmtip]{xy}
\usepackage{cancel}
\usepackage{ulem}
\SelectTips{lu}{11}
\usepackage{setspace}
\usepackage{subfigure,graphics,graphicx,psfrag}
\usepackage{tikz} 
\usepackage{overpic}

\theoremstyle{plain}
\newtheorem{theorem}{Theorem}[section]
\newtheorem{proposition}[theorem]{Proposition}
\newtheorem{corollary}[theorem]{Corollary}
\newtheorem{lemma}[theorem]{Lemma}

\theoremstyle{remark}
\newtheorem{remark}{Remark}

\theoremstyle{definition}
\newtheorem{definition}[theorem]{Definition}

\title{ \bf Separation properties of codimension-1 maps between generalized manifolds}

\author[E.\ L.\ dos Santos]{Edivaldo L.\ dos Santos}
\address{Departamento de Matem\'{a}tica \\
	Federal University of S\~ao Carlos \\
	Rodovia Washington Luiz. km 235 \\
	S\~ao Carlos, SP, Brazil. \textit{E-mail address:} \rm edivaldo@ufscar.br  \textit{ORCID:} https://orcid.org/0000-0001-9046-1473}

    \author[ Telmo A. ]{Telmo I. Acosta Vellozo }
\address{ CENUR Noreste, Universidad de la Rep\'ublica - Uruguay \textit{E-mail address:} \rm telmo.acosta@cur.edu.uy \textit{ORCID:} https://orcid.org/0009-0004-2692-5375
}

\subjclass[2010]{ 57N35 Secondary 57N75, 57N45 }
\keywords{Generalized Manifolds, Jordan–Brouwer theorem, separation theorems, Connected components, self-intersections set}

\thanks{This work is partially supported by the Projeto Tem\'atico: Topologia Alg\'ebrica,Geom\'etrica e Diferencial, FAPESP Process Number 2022/16455-6. This study was financed in part by the Coordenação de Aperfeiçoamento de Pessoal de Nível Superior - Brasil (CAPES) - Finance Code 001.}

\begin{document}
\baselineskip=1.7em

\maketitle

\noindent{\bf Abstract.}
In this work, we obtained separation results via codimension-1 maps to generalized manifolds. More specifically, we proved results that allow us to estimate the number of connected components of the complement of the image of such maps.

\section{Introduction}
\par This work focuses on obtaining results for generalized manifolds through generalizations of the results obtained for smooth manifolds and topological manifolds. Given a continuous map $f:M\rightarrow N$ between generalized manifolds, it is often an important problem to study the topology of the complement $N-f(M)$ of $f(M)$ in $N$. Here we consider the codimension 1 case; i.e., the case where  $\dim N- \dim M=1$, and study the number of connected components of $N-f(M)$. Such a problem was first considered by Vaccaro in \cite{Vaccaro}, who found a PL-immersed $S^2$ in $\mathbb{R}^3$ whose complement is connected. On the other hand, it is known by the Jordan-Brouwer Theorem that the number of connected components of the complement of an embedded $(n-1)$-sphere in $\mathbb{R}^n$ is equal to 2. Another important result in this way was obtained by Feighn in \cite{Feighn}, taking that if $H_1(N;\mathbb{Z}_2)=0$, then every proper $C^2$-immersion $f:M\rightarrow N$ disconnects $N$ where $M$ and $N$ are smooth manifolds. 
\par For topological manifolds, there are many results with different approaches. In \cite{Ballesteros1}, Nu\~no Ballesteros and Romero Fuster showed that, if $H_1(N;\mathbb{Z}_2)=0$, then every $f:M\rightarrow N$ proper continuous map whose $A(f)=\{x\in M: f^{-1}f(x)\neq x\}$ the self-intersection set is not dense in any connected component  of $M$, the complement $N-f(M)$ is disconnected. In \cite{Ballesteros}, Nu\~no Ballesteros with the same hypothesis as in \cite{Ballesteros1} and $N-f(A)$ connected, gave a formula for the number of connected components of $N-f(M)$ in terms of the \v Cech cohomology.

\par Another result in this direction for smooth manifolds using normal crossing points and the primary obstruction to topological embeddings was obtained by Biasi and Saeki in \cite{Biasi3}.\\
\par In this work, we present two main separation results involving codimension-1 maps in generalized manifolds. 

In Section 2, we recall the fundamental definitions and properties of generalized manifolds.  The section emphasizes that generalized manifolds need not be homeomorphic to topological manifolds, and it reviews key duality theorems that extend to this setting, namely Poincaré duality and Alexander duality.

In Section 3, we establish the first main separation result for codimension-1 maps into generalized manifolds, with the results depending on the structure of the self-intersection set and extending those of Ballesteros \cite{Ballesteros}. Theorem 3.2 provides a formula for the number of connected components of the complement in terms of Čech cohomology, while Corollary 3.3 yields a Jordan–Brouwer  theorem for generalized manifolds.

Finally, in Section 4, we develop the second main separation result for codimension-1 maps into generalized manifolds, based on the primary obstruction to topological embeddings. After introducing the obstruction class $\theta(f)$ and its properties, we prove Theorem 5.7, which shows that under suitable hypotheses (including $H_1(N)=0$ and $w_1(f)=0$), the complement $N - f(M)$ has at least three connected components. This extends separation results of Biasi and Saeki in \cite{Biasi3}.

\section{Generalized Manifold}
\begin{definition}
    A locally compact space $X$ is a generalized $m$-manifold if the following conditions are satisfied:
    \begin{enumerate}
        \item $X$ is an ENR (Euclidean Neighborhood Retract), if there exist a subspace $Y$ of some $\mathbb{R}^{n}$ homeomorphic to $X$, a neighborhood $V$ of $Y$ and a retraction $r:V\longrightarrow Y$;
        \item $H_{\ast}(X,X-\{x\};R)\simeq H_{\ast}(\mathbb{R}^{m},\mathbb{R}^{m}-\{0\};R)$ for every $x\in X$, where $R$ is $\mathbb{Z}$ or $\mathbb{Z}_2$.
    \end{enumerate}
    The space $X$ is a generalized $m$-manifold with boundary if the condition $2$ is replaced by $H_m(X, X-\{ x\};R)\simeq R$ or $0$, and if $bd(X)=\{x\in X: H_m(X, X-\{ x\};R)\simeq 0\}$ is a generalized $(m-1)$-manifold embedded in $X$
\end{definition}

Such manifolds have been studied throughout the second half of the twentieth century, for example, it is well known that there are generalized manifolds that are not homeomorphic to topological manifolds(see \cite{Bryant0}). Another known fact is that the following duality results hold (see \cite{Bredon} \cite{Denise}).

\begin{theorem}[Poincar\'e's Duality Theorem for Generalized Manifold]
  The duality map\\ $D_M : H_c^k(M;R)\rightarrow H_{n-k}(M;R)$ given by $D_M(\alpha)= \alpha \frown [M]$, is an isomorphism for all $k$ whenever $M$ is an $R$-oriented generalized $n$-manifold.  
\end{theorem}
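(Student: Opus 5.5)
The plan is to run the standard Mayer--Vietoris proof of Poincaré duality, as in Hatcher or Bredon, and check that it uses only the local homology of $M$. Condition (2) of the definition supplies exactly that. For an open set $U\subseteq M$ we define the local duality map
\[
D_U : H^k_c(U;R)\to H_{n-k}(U;R),\qquad D_U(\alpha)=\alpha\frown[U],
\]
where $[U]$ is the restriction of the orientation. Here $H^k_c(U)=\varinjlim_K H^k(U,U-K)$ over compact $K\subseteq U$. The cap product with the fundamental class $\mu_K\in H_n(U,U-K)$ is compatible with the direct limit. The goal is to show that $D_U$ is an isomorphism for every open $U$, and then take $U=M$.

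We first need fundamental classes. For each compact $K\subseteq M$ we want a unique class $\mu_K\in H_n(M,M-K;R)$ restricting to the orientation generator at each $x\in K$, and we want $H_i(M,M-K;R)=0$ for $i>n$. The argument is the usual one: handle small compact sets inside a chart-like neighbourhood, then glue by Mayer--Vietoris. The delicate point is that $M$ has no Euclidean charts. We replace them as follows. Since $M$ is an ENR, it is locally contractible and has singular homology agreeing with Čech/sheaf homology. The condition $H_*(M,M-x)\cong H_*(\mathbb R^n,\mathbb R^n-0)$ then makes the orientation sheaf $\mathcal H_n$ locally constant of rank one. Bredon's sheaf-theoretic framework, \emph{Sheaf Theory} V.9, yields $\mu_K$ and the vanishing above degree $n$ for compact sets that are finite unions of small closed neighbourhoods. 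A limit argument, using that $M$ is an ENR and hence homology is continuous on compacta, extends this to all $K$.

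Next comes the inductive duality argument, in three steps.

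1. If $U,V$ are open and $D_U$, $D_V$, $D_{U\cap V}$ are isomorphisms, then $D_{U\cup V}$ is an isomorphism. The Mayer--Vietoris sequences for $H^*_c$ and for $H_*$ commute up to sign with the duality maps, and the five lemma applies.
2. If $U=\bigcup U_i$ is an increasing union and each $D_{U_i}$ is an isomorphism, then so is $D_U$. Both sides commute with directed unions: compact supports on one side, and compact supports of singular chains on the other.
3. For the base case we need a basis of open sets $W$ on which $D_W$ is an isomorphism.

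The third step is the main obstacle. In a topological manifold one takes convex open subsets of $\mathbb R^n$, where both sides are computed explicitly. In a generalized manifold, small open sets need not look like balls. My first attempt avoids a basis of nice sets altogether by passing to sheaf cohomology. Since $M$ is locally contractible and finite-dimensional (being an ENR), singular cohomology with compact supports agrees with sheaf cohomology $H^*_c(M;R)$. Likewise singular homology agrees with Borel--Moore/sheaf homology with compact supports. The local condition says that $M$ is a cohomology $n$-manifold over $R$ in the sense of Borel--Bredon, with orientation sheaf $\mathcal O$ trivialized by $[M]$. Bredon's sheaf-theoretic Poincaré duality $H^k_c(M;\mathcal O\otimes R)\cong H^c_{n-k}(M;R)$ then gives the isomorphism. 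The remaining task is to check that the sheaf-theoretic isomorphism coincides with cap product with $[M]$ under these identifications. This is a naturality check on the base case of a point neighbourhood, propagated by the Mayer--Vietoris induction above.

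Finally, taking $U=M$ shows that $D_M$ is an isomorphism for all $k$.
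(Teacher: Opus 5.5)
Stripped of its scaffolding, your argument follows the paper's route. The paper gives no proof here and quotes the result from Bredon's sheaf theory. Your load-bearing step is the same citation: $M$ is finite-dimensional and locally contractible, singular and sheaf-theoretic (co)homology agree on it, and sheaf-theoretic duality for cohomology manifolds gives an isomorphism $H^k_c(M;R)\cong H_{n-k}(M;R)$.

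The genuine gap is the step you defer to the end: showing that \emph{this} isomorphism is $\alpha\mapsto\alpha\frown[M]$. Without it you only have an abstract isomorphism between two groups, which says nothing about $D_M$. Your proposed check, verifying on a point neighbourhood and propagating through the Mayer--Vietoris induction, cannot work, for two reasons.
\begin{enumerate}
\item It is circular. The induction needs a basis of open sets $W$ on which the cap-product map is already understood. The lack of such a basis (no Euclidean charts) is exactly why you abandoned the Mayer--Vietoris scheme, and a point neighbourhood in $M$ is not a set on which either side can be computed. Indeed, if you had a basis closed under finite intersections on which $D_W$ is an isomorphism, your steps 1 and 2 would already finish the proof and no sheaf theory would be needed.
\item Mayer--Vietoris and the five lemma transfer the property of \emph{being an isomorphism}, not \emph{equality of two maps}. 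Suppose two transformations commuting with the connecting maps agree on $U$, $V$ and $U\cap V$. Their difference on $U\cup V$ then only factors through $\delta\colon H^k_c(U\cup V)\to H^{k+1}_c(U\cap V)$ and lands in $\ker\partial$; nothing forces it to vanish.
\end{enumerate}

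The identification has to be made at the level of sheaves. A check at a point is meaningful only as a statement about stalks, namely the local orientation $R\to H_n(M,M-x;R)$. Stalk-level information globalizes through sheaf-theoretic local-to-global machinery, not through Mayer--Vietoris on open sets. Concretely, you need two facts:
\begin{itemize}
\item the sheaf-theoretic duality isomorphism of a cohomology manifold is cap product with its orientation class;
\item the comparison isomorphisms between singular and sheaf-theoretic (co)homology on locally contractible finite-dimensional spaces respect cap products.
\end{itemize}
These are exactly what the paper imports by citing Bredon. Once you cite them, your steps 1--3 and the separate construction of the classes $\mu_K$ become superfluous.

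That construction has the same defect as step 3 anyway. ``Small closed neighbourhoods'' in $M$ admit no chart-based computation. Moreover, the vanishing of $H_i(M,M-K;R)$ for $i>n$ is itself an instance of Alexander duality, $H_i(M,M-K;R)\cong\check{H}^{n-i}(K;R)$, so it must come out of the sheaf theory rather than precede it. Also, the limit you need there is not continuity of homology on compacta. It is only that $M-K=\bigcup_i (M-K_i)$ for $K_i\downarrow K$, which holds in any space.
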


\begin{theorem}[Alexander's Duality Theorem for Generalized Manifold]
  Let $X$ be an oriented generalized $n$-manifold. If $B$ is a closed subset of $X$, then $\check{H}_c^{n-i}(B;R)\simeq H_i (X,X-B;R)$ for each integer $i$, where $\check{H}_c^{\ast}$ denotes \v Cech cohomology with compact support.
\end{theorem}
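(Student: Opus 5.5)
The plan is to deduce Alexander duality from Poincaré duality on the open subset $X-B$, combined with a direct-limit argument over compact neighbourhoods of $B$. This is the classical route for topological manifolds (as in Bredon or Hatcher). Throughout, orientation means $R$-orientation, with $R=\mathbb{Z}$ or $\mathbb{Z}_2$.

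\textbf{Step 1 (compact $B$).} First assume $B$ is compact. For each open neighbourhood $U$ of $B$ with compact closure, excision gives
\[
H_i(X,X-B;R)\cong H_i(U,U-B;R).
\]
Since $X$ is an oriented generalized $n$-manifold, the local homology condition in the definition supplies a fundamental class $\mu_B\in H_n(X,X-B;R)$ restricting to the local orientation at each point. Cap product with $\mu_B$ gives maps
\[
H^{n-i}(U;R)\longrightarrow H_i(X,X-B;R),
\]
and these are compatible as $U$ shrinks. The target does not depend on $U$, so we obtain a map
\[
\varinjlim_U H^{n-i}(U;R)\longrightarrow H_i(X,X-B;R).
\]
Because $X$ is an ENR, $X$ is locally contractible and $B$ is taut. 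Hence the direct limit over neighbourhoods equals the Čech cohomology $\check{H}^{n-i}(B;R)$, which coincides with $\check{H}^{n-i}_c(B;R)$ since $B$ is compact.

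\textbf{Step 2 (the map is an isomorphism).} I would prove this by the standard Mayer--Vietoris bootstrapping on $B$. The steps, in order, are:
\begin{enumerate}
\item $B$ is a point. Both sides are computed by the local homology axiom: the right side is $H_i(\mathbb{R}^n,\mathbb{R}^n-0)$, and the left side is $R$ in degree $0$ and zero otherwise.
\item $B$ is a compact set contained in a small Euclidean-like open chart. Generalized manifolds have no charts, so this case must be replaced: I will instead work with compact sets covered by finitely many small sets, using Mayer--Vietoris for $B_1\cup B_2$. Both sides satisfy compatible Mayer--Vietoris sequences, and Čech continuity lets us pass to limits.
\end{enumerate}
The cleaner route avoids cellular induction. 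Apply Theorem 2.1 (Poincaré duality) to the open generalized manifold $X-B$, and use the long exact sequences of the pairs $(X,X-B)$ and $(X, B)$ in compactly supported cohomology. This gives a commutative ladder between
\[
\cdots\to H^{k}_c(X-B)\to H^k_c(X)\to \check{H}^k_c(B)\to H^{k+1}_c(X-B)\to\cdots
\]
and
\[
\cdots\to H_{n-k}(X-B)\to H_{n-k}(X)\to H_{n-k}(X,X-B)\to H_{n-k-1}(X-B)\to\cdots .
\]
The vertical maps are cap product with the fundamental classes of $X$ and of $X-B$, which are compatible under restriction. The first two vertical maps are isomorphisms by Theorem 2.1, so the five lemma gives the middle isomorphism. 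This also removes the need for compactness of $B$, so it handles arbitrary closed $B$ directly.

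\textbf{Main obstacle.} The hard part is verifying exactness of the top row with Čech cohomology of $B$, and commutativity of the ladder up to sign. Exactness of the compact-support sequence
\[
H_c^*(X-B)\to H_c^*(X)\to \check{H}_c^*(B)
\]
holds in sheaf cohomology for closed $B$ in a locally compact Hausdorff space. One then identifies sheaf cohomology with singular cohomology on $X$ and $X-B$, which requires local contractibility; this holds because $X$ is an ENR and open subsets of ENRs are ENRs. On $B$ it is identified with Čech cohomology, which holds by tautness in paracompact spaces. Commutativity of the connecting-homomorphism square I would check by the usual cap-product naturality argument, as in Bredon's treatment, where only the existence of a coherent orientation is used, not manifold charts.
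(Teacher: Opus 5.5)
The paper does not prove this statement; it quotes it from the literature, where in Bredon's sheaf theory it is a special case of the duality theorem for homology manifolds. Your ``cleaner route'' deduces it from Poincaré duality on $X$ and on the open generalized manifold $X-B$ through a five-lemma ladder. That is a legitimate and more economical alternative, since it needs only Poincaré duality plus standard facts about compactly supported cohomology, and the abandoned Mayer--Vietoris bootstrapping in Step 2 can be dropped.

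As written, however, the five-lemma step has a genuine gap. The five lemma does not produce the middle vertical arrow; it only shows that a given arrow making the ladder commute is an isomorphism. You construct $\check H^{k}(B;R)\to H_{n-k}(X,X-B;R)$ only for compact $B$: Step 1 uses a relatively compact neighbourhood, the class $\mu_B$, and $\check H^*=\check H^*_c$. You then claim the ladder ``handles arbitrary closed $B$ directly.'' For non-compact closed $B$ no duality map has been defined, so there is nothing for the five lemma to act on. This is exactly how the paper uses the theorem, e.g.\ for $f(X)\subset Y$ with $f$ proper and $X$ non-compact in Theorem 3.2.

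To close the gap you need to build the map for closed $B$. One way:
\begin{enumerate}
\item Write $\check H^k_c(B)=\varinjlim_V\check H^k(B,B-V)$ over relatively compact open $V\subset X$.
\item By tautness, represent classes in $H^k(W,W')$ for open $W\supset B$ and $W'\supset B-V$, so that $C=B-W'$ is compact.
\item Cap with $\mu_C\in H_n(W,W-C)$. This lands in $H_{n-k}(W,W-B)\cong H_{n-k}(X,X-B)$, because $W'\cup(W-B)=W-C$.
\end{enumerate}
You must then check independence of choices and commutativity up to sign of all three squares. The hardest is the square pairing $\delta\colon\check H^k_c(B)\to H^{k+1}_c(X-B)$ with $\partial\colon H_{n-k}(X,X-B)\to H_{n-k-1}(X-B)$. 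You currently defer this entirely to Bredon, which is the source the paper cites, so at that point your argument is no longer independent of the quoted result.

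A smaller point: the classes $\mu_C$ for compact $C$ are not supplied by the pointwise local-homology axiom alone. For topological manifolds they come from a chart-based Mayer--Vietoris argument, which is unavailable here. You should take them from the coherent orientation $\{\mu_K\}$ that the Poincaré duality theorem already presupposes, or from local constancy of the orientation sheaf.
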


We now present the main results concerning the topology of generalized manifolds. These results are obtained under two assumptions, which we define below.

\begin{definition}
    A generalized $n$-manifold, $n\geq 5$, has the disjoint disks property (DDP) if given $\epsilon>0$ and a pair of maps $f,g:D^{2}\rightarrow X$, there is a pair of maps $f',g':D^{2}\rightarrow X$ such that $d(f,f')<\epsilon$, $d(g,g')<\epsilon$ and $f'(D^2)\cap g'(D^2)= \emptyset$.
\end{definition}
\begin{definition}
    A resolution for a generalized $n$-manifold $X$ is a map $\phi :M\rightarrow X$ such that $\phi |_{\phi^{-1}(U)}: \phi^{-1}(U)\rightarrow U$ is a homotopy equivalence for all open $U\supset X$, where $M$ is a topological manifold. If $X$ admits a resolution, we say that $X$ is resolvable.
\end{definition}

Now, we are in condition to state one of the most important theorems for generalized manifolds.

\begin{theorem}[Edwards's approximation theorem]
 Let $X$ be a generalized $n$-manifold with DDP. If $\phi :M\rightarrow X$ is a resolution for $X$, then $\phi$ is the limit of a sequence of homeomorphisms $h_i:M \rightarrow X$.   
\end{theorem}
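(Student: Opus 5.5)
The resolution $\phi\colon M\to X$ is a cell-like map: its point preimages are cell-like, because $\phi$ restricts to homotopy equivalences over small open sets. So the plan is the Bing shrinking criterion. A surjective map $\phi\colon M\to X$ between compact metric spaces (or proper maps in the locally compact case) is a limit of homeomorphisms if and only if the following holds. For every $\epsilon>0$ there is a homeomorphism $h\colon M\to M$ with $d(\phi h,\phi)<\epsilon$ and $\operatorname{diam} h(\phi^{-1}(x))<\epsilon$ for all $x\in X$. Everything is therefore reduced to shrinking the decomposition $\mathcal{G}=\{\phi^{-1}(x)\}$ of $M$, using the DDP of $X\cong M/\mathcal{G}$.

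First, since $\phi$ is a proper cell-like map onto an ENR, I would show it is a fine homotopy equivalence. The hypothesis that $\phi|_{\phi^{-1}(U)}$ is a homotopy equivalence for every open $U$ gives, via a standard ANR argument, a homotopy inverse $s\colon X\to M$ that is $\epsilon$-controlled over $X$. Second, I would use the DDP together with $n\ge5$ to show that maps $D^2\to M$ can be pushed off the nondegenerate set $N_\delta=\bigcup\{g\in\mathcal{G}:\operatorname{diam} g\ge\delta\}$ up to controlled error. To do this, compose the map with $\phi$, apply the DDP in $X$ against the image of a 2-skeleton, and lift back using $s$. This shows that $\phi(N_\delta)$ is, in the limit, "tame" in the sense of having 2-dimensional disjunction. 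Third, I would apply the Edwards shrinking argument. Cover $M$ by small handles, and use general position in the topological manifold $M$ (valid since $n\ge5$, and in the topological category via Quinn/Kirby–Siebenmann handle theory) together with the disjoint-disk lifts from the second step. This produces engulfing moves whose composition is a homeomorphism $h$ shrinking each $g\in\mathcal{G}$ to diameter $<\epsilon$ while moving points only within $\phi$-preimages of $\epsilon$-sets. The Bing criterion then yields homeomorphisms $h_i=\phi\circ h_i'^{-1}$-type corrections converging to $\phi$.

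I expect the main obstacle to be the third step, turning the 2-dimensional disjunction in $X$ into actual shrinking homeomorphisms of $M$. The approach is to work on the 2-skeleton of a fine triangulation or handle structure and its dual $(n-3)$-skeleton. The DDP lets the 2-skeleton be moved off the nondegenerate elements. The dual $(n-3)$-skeleton can then be engulfed, using cell-likeness and the codimension $\ge3$ condition coming from $n\ge5$, so that each decomposition element lies in a small regular neighborhood. Iterating over the finitely many handle indices, with controlled errors summing to $<\epsilon$, completes the shrink. In a paper of this kind I would in practice cite Edwards' original argument (as presented by Daverman) for this step rather than reprove it.
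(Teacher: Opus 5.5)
The paper gives no proof of this statement. It is quoted as background (Edwards' cell-like approximation theorem), so on the paper's side there is only a citation, and your closing appeal to Edwards' argument as presented in Daverman's \emph{Decompositions of Manifolds} agrees with that. Your outer frame is the standard one and is fine. Assuming $\phi$ is proper (part of the usual definition of a resolution, though the paper's wording omits it), $\phi$ is a proper cell-like surjection. By Bing's shrinking criterion it is then a limit of homeomorphisms exactly when $\mathcal{G}=\{\phi^{-1}(x)\}$ is shrinkable.

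The part you propose to prove yourself, however, fails. Your second step claims that maps $D^2\to M$ can be pushed off $N_\delta$ with controlled error. This is false even when the error is measured only after composing with $\phi$. Let $M=\mathbb{R}^{n-1}\times\mathbb{R}$ with $n\geq 5$, and let $\mathcal{G}$ consist of the arcs $\{x\}\times[0,1]$ for $|x|\leq 1$, together with singletons. Collapsing the slab $D^{n-1}\times[0,1]$ vertically, with the collapse tapered off outside it, identifies $X=M/\mathcal{G}$ with $\mathbb{R}^n$. So $X$ has the DDP, and $\phi$, a proper cell-like map between manifolds, is a resolution. For $\delta<1$ one gets $N_\delta=D^{n-1}\times[0,1]$ and $\phi(N_\delta)=D^{n-1}\times\{0\}$. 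Take a vertical 2-disk in $M$ passing straight through the slab. Its $\phi$-image contains a vertical segment crossing $D^{n-1}\times\{0\}$ at an interior point. By the intermediate value theorem, every map $D^2\to M$ whose composite with $\phi$ is sufficiently close to this disk still meets $\phi^{-1}\phi(N_\delta)=N_\delta$, and a fortiori no small push in $M$ removes it.

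Your mechanism could not produce such push-offs in any case. The DDP only separates two maps into $X$ from each other, and composing with a fine homotopy inverse $s$ says nothing about where the resulting maps meet decomposition elements. In Edwards' proof the DDP is used to control how 2-dimensional complexes meet the decomposition elements, not to clear them of the nondegenerate set. It is used only after a separate reduction to decompositions whose nondegenerate elements have 0-dimensional image. That reduction is absent from your outline, and your third step, which relies on the ``disjoint-disk lifts'' of the second, inherits the defect.

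So your proposal stands only through its final citation of Edwards' theorem, which is exactly the paper's treatment. The intermediate sketch should not be presented as an argument for it.
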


\begin{corollary}\label{Edwards cor}
Let $X$ be a resolvable generalized $n$-manifold with $n \geq 5$. Then $X$ is a topological manifold if and only if $X$ has the DDP.
\end{corollary}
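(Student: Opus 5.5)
The plan is to prove the two implications of Corollary~\ref{Edwards cor} separately. The reverse implication is the one that uses Edwards's approximation theorem.

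\emph{Topological manifold $\Rightarrow$ DDP.} Suppose $X$ is a topological $n$-manifold with $n\geq 5$. Let $f,g:D^2\rightarrow X$ and $\epsilon>0$ be given.

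First, approximate $f$ and $g$ within $\epsilon/2$ by maps that are PL in local Euclidean charts. Cover the compact set $f(D^2)\cup g(D^2)$ by finitely many coordinate balls, subdivide $D^2$ finely so each simplex lands in a chart, and straighten simplex by simplex.

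Next, use general position in each chart, where $2+2<n$. A small perturbation of the vertices makes the images of the two $2$-complexes disjoint. The usual inductive chart-by-chart argument keeps these perturbations small and controlled, so the final maps $f',g'$ are still within $\epsilon$ of $f,g$.

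This step only uses the local Euclidean structure, so it needs no smoothing or triangulation of $X$. In fact $n\geq 5$ is needed only to have $4<n$.

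\emph{DDP $\Rightarrow$ topological manifold.} Suppose $X$ is resolvable with $n\geq 5$ and has the DDP. Choose a resolution $\phi:M\rightarrow X$. The hypotheses of Edwards's approximation theorem are then exactly satisfied, so $\phi$ is a limit of homeomorphisms $h_i:M\rightarrow X$. Since $M$ is a topological manifold and $h_1$ is a homeomorphism onto $X$, the space $X$ is a topological manifold.

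The only point to check is that $\phi$ satisfies the definition of a resolution used in Edwards's theorem. It does, since this is a proper cell-like map from a manifold, and that is the sense in which the paper's definition should be read, with $U\subset X$ open.

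The main obstacle is the first implication. The delicate part is making the general-position perturbation in a topological manifold without a global PL structure. I will handle it by working in finitely many charts and applying the standard inductive relative general position argument with shrinking tolerances.
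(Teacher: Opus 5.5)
Your proposal is correct. For the substantive direction it follows the paper's reasoning exactly. The paper gives no written proof and presents the statement as an immediate consequence of Edwards's approximation theorem applied to a resolution $\phi:M\rightarrow X$, which is what you do. You are also right to read a resolution as a proper cell-like surjection with $U\subset X$ open; the paper's ``$U\supset X$'' is a typo.

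The difference is that the paper is silent on the implication ``topological manifold $\Rightarrow$ DDP''. This implication does not follow from Edwards's theorem at all; the paper tacitly treats it as the classical general-position fact for $n\geq 5$. Your argument supplies it, which is a genuine addition.

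One caution in that part. The global step ``straighten simplex by simplex'' in your first paragraph does not define a map. Adjacent simplices straightened in different charts disagree on their common face, because the transition maps are only homeomorphisms. Drop that step and use only the local version you describe at the end:
\begin{itemize}
\item For each compact chart ball $B_i$, the pairs $(f,g)$ with $f(D^2)\cap g(D^2)\cap B_i=\emptyset$ form an open dense subset of $C(D^2,X)\times C(D^2,X)$.
\item Openness uses the compactness of $B_i$.
\item Density comes from modifying $f,g$ only over a neighbourhood of $B_i$ inside the chart, interpolating to PL maps in general position. These are disjoint there because $2+2<n$.
\end{itemize}
Now cover a compact neighbourhood of $f(D^2)\cup g(D^2)$ by finitely many such balls. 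Intersecting the corresponding finitely many open dense sets then gives the required $f',g'$.
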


In this way, Corollary \ref{Edwards cor} provides a criterion for characterizing topological manifolds among generalized manifolds. In \cite{Quinn, Quinn1}, F. Quinn associates to any connected generalized $n$-manifold, with $n \geq 4$, a local index $\iota(X) \in 1 + 8\mathbb{Z}$ and shows that $\iota(X) = 1$ if and only if $X$ is resolvable. Combining this with Edwards's Approximation Theorem, we obtain the following characterization theorem for topological manifolds.

\begin{theorem}[Edwards-Quinn]
  If $n\geq 5$, a generalized $n$-manifold $X$ with $DDP$ is a topological manifold if and only if $\iota(X)=1$.  
\end{theorem}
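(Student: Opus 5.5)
The Edwards--Quinn theorem follows by combining Quinn's resolution theorem with Corollary \ref{Edwards cor}. I will prove the two implications separately.

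\emph{The case $\iota(X)=1$.} Assume $X$ is a generalized $n$-manifold with $n\geq 5$, with the DDP and $\iota(X)=1$. Quinn's theorem, quoted above from \cite{Quinn, Quinn1}, says that for a connected generalized manifold of dimension at least $4$ we have $\iota(X)=1$ exactly when $X$ is resolvable. So $X$ has a resolution $\phi: M\to X$. Now $X$ is a resolvable generalized $n$-manifold with $n\geq 5$ and the DDP, so Corollary \ref{Edwards cor} shows that $X$ is a topological manifold. Alternatively, Edwards's approximation theorem applied to $\phi$ gives homeomorphisms $h_i: M\to X$ converging to $\phi$, and any one of them exhibits $X$ as homeomorphic to the topological manifold $M$.

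\emph{The case $X$ a topological manifold.} Conversely, suppose $X$ is a topological manifold. Then the identity map $\mathrm{id}: X\to X$ is a resolution. Indeed, for every open $U\subset X$, the restriction of $\mathrm{id}$ to $\mathrm{id}^{-1}(U)=U$ is the identity of $U$, which is a homotopy equivalence. Hence $X$ is resolvable, and Quinn's theorem gives $\iota(X)=1$. The DDP is not needed for this direction.

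\emph{Connectedness.} The only point needing care is that Quinn's index is defined for connected generalized manifolds, while the statement does not assume $X$ connected. I would handle this in one of two ways.
\begin{itemize}
\item Read $\iota(X)=1$ as meaning that $\iota=1$ on every component. Each component of an ENR is open, so each component is itself a generalized $n$-manifold.
\item Note that Quinn's index is locally constant, so it is constant on components.
\end{itemize}
It remains to check that the DDP passes to each component. A map from $D^2$ has connected image, so it lands in a single component. An $\epsilon$-close perturbation can be taken inside that same component, because the component is open. Hence the DDP holds in each component. Finally, $X$ is a topological manifold if and only if each of its components is, so the theorem reduces to the connected case treated above.
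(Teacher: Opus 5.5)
Your proof is correct and follows exactly the route the paper indicates: Quinn's characterization ($\iota(X)=1$ if and only if $X$ is resolvable) combined with Edwards's approximation theorem via Corollary \ref{Edwards cor}, with the converse (identity resolution) and the reduction to components filling in details the paper leaves implicit. One step needs tightening: when passing the DDP to a component $X_0$, openness alone does not keep the perturbed maps in $X_0$; apply the DDP with $\epsilon$ below the (positive) distance from the compact sets $f(D^2)$, $g(D^2)$ to the closed set $X-X_0$, which then forces $f'(D^2),g'(D^2)\subset X_0$.
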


This characterization raises the question: Are there generalized manifolds $X$ with $\iota(X)\neq 1$? The following theorem, which appeared \cite{Bryant0}, answers this question.

\begin{theorem}[Bryant, Ferry, Mio, Weinberger]
  Let $M$ be a closed simply-connected topological $n$-manifold and $\sigma\in 8\mathbb{Z}+1$. There is a generalized $n$-manifold $X$ homotopy equivalent to $M$ with $\iota(X)=\sigma$.  
\end{theorem}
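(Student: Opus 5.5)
The statement is the Bryant--Ferry--Mio--Weinberger existence theorem. The plan is surgery-theoretic: construct $X$ as a controlled Poincaré complex that is homotopy equivalent to $M$, and arrange its local (Quinn) index to be $\sigma$. Throughout I take $n\ge 6$, the range where controlled surgery is available, and I use Quinn's obstruction theory quoted above. Quinn shows that for an ENR homology manifold $X$ (with $n\ge5$, $X$ connected) the index $\iota(X)$ is the obstruction to a resolution. He also shows that it can be read off from the controlled surgery obstruction of a degree-one normal map to $X$, through the identification $H_0(X;\mathbb L)\to \mathbb Z$, with $\sigma = 1+8k$ corresponding to $k\in \mathbb{Z}=L_0(\mathbb Z)$.

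\textbf{Step 1 (start from a Poincaré model).} Take $M$ itself as a Poincaré complex. Use the surgery exact sequence with the periodic $\mathbb L$-spectrum $\mathbb L$ in place of its connective cover $\mathbb L\langle 1\rangle$. Since $M$ is simply connected, $L_n(\mathbb Z[\pi_1 M])=L_n(\mathbb Z)$, and the assembly map is well understood. The extra $\mathbb{Z}=H^0$-summand of $\mathbb L$-cohomology is what parametrizes the local index. So we pick a normal invariant in $[M,\mathbb L]$ whose $\mathbb{Z}$-component is $k=(\sigma-1)/8$, realized as a class in $H^0(M;\mathbb{Z})$, with trivial obstruction in $L_n(\mathbb Z)$. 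Trivial obstruction can be arranged because the $\mathbb{Z}$-summand assembles to zero for closed simply connected $M$; if needed, one adjusts by a class in the connective part.

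\textbf{Step 2 (construction of $X$ by controlled gluing).} Take a topological manifold $M'$ and a degree-one normal map $M'\to M$ realizing this normal invariant but not yet a homotopy equivalence. Realize the obstruction as a controlled surgery problem over $M$ with arbitrarily fine control. Iterate: at stage $i$ glue in a controlled $h$-cobordism-like piece obtained by Wall realization over a finer cover, with control $\epsilon_i\to0$. Take $X$ to be the limit (inverse limit / telescope completion) of these $\epsilon_i$-Poincaré complexes $X_i$.

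\textbf{Step 3 (verification).} Show that $X$ is an ENR with the local homology of $\mathbb R^n$. The ENR property follows from the uniform local contractibility of the controlled limit, since the $X_i$ are uniformly locally finite-dimensional and ULC. Local homology follows because the controlled Poincaré duality at scale $\epsilon_i\to0$ becomes exact duality in the limit, and exact duality for an ENR gives the homology-manifold condition. The homotopy equivalence $X\simeq M$ is preserved along the sequence. Finally, $\iota(X)=\sigma$ because Quinn's index is the image of the normal invariant under $H^0(X;\mathbb Z)\to 8\mathbb Z+1$, which is $1+8k$ by construction.

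The main obstacle is Step 2, specifically the convergence: the iterated controlled gluings must produce a genuinely locally compact ENR rather than a wild limit. The key tool is the controlled $\epsilon$-$\delta$ surgery/realization theorem together with the squeezing ("thin $h$-cobordism") estimates. I would first try to establish the stage-to-stage control bounds $\delta_{i+1}\ll\epsilon_i$ so that the maps $X_{i+1}\to X_i$ are $\epsilon_i$-homotopy equivalences, and then invoke a Chapman--Ferry type approximation to obtain the limit.
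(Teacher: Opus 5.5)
\textbf{There is a genuine gap: nothing in your construction can produce $\iota\neq 1$.}

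The paper gives no proof of this statement. It is quoted from Bryant--Ferry--Mio--Weinberger, whose theorem includes the hypothesis $n\ge 6$. You are right to restore it; the paper's version omits it. Measured against their argument, your outline fails exactly where the index has to enter.

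In Step~2 you take a topological manifold $M'$ and a degree-one normal map $M'\to M$ that realizes a normal invariant whose $L_0(\mathbb{Z})$-component is $k=(\sigma-1)/8$. For $k\neq 0$ no such map exists. Normal invariants of normal maps from manifolds lie in $[M,G/\mathrm{TOP}]=H^0(M;\mathbb{L}\langle 1\rangle)$. The extra $\mathbb{Z}$ of $[M,\mathbb{Z}\times G/\mathrm{TOP}]$ is precisely what no manifold can carry, which is why it detects $\iota$. Step~1 is harmless as bookkeeping. However, exactness of the periodic-$\mathbb{L}$ surgery sequence for homology manifolds is a \emph{consequence} of the theorem you are proving, so it cannot realize anything.

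The other ingredients you list cannot supply the class either:
\begin{itemize}
\item Controlled Wall realization over the $n$-dimensional control space realizes elements of $H_{n+1}(M;\mathbb{L})$. This equals $H_{n+1}(M;\mathbb{L}\langle 1\rangle)$, because the cofibre of $\mathbb{L}\langle 1\rangle\to\mathbb{L}$ has homotopy only in degrees $\le 0$, so it contributes nothing in degree $n+1$ over an $n$-dimensional space.
\item Controlled $h$-cobordisms are close to products by the thin $h$-cobordism theorem.
\item Manifold pieces glued along finely controlled homotopy equivalences of manifolds are, by $\alpha$-approximation, glued along near-homeomorphisms.
\end{itemize}
So with the tools you name, each $X_i$ stays finely controlled homotopy equivalent to a manifold. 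The $L_0(\mathbb{Z})$-component of the controlled surgery obstruction of any manifold normal map into it is then $0$. A limit, if it existed, would have $\iota(X)=1$, so the claim that $\iota(X)=1+8k$ ``by construction'' has nothing behind it.

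\textbf{What is missing.} This is the heart of the Bryant--Ferry--Mio--Weinberger argument. The class $k$ must be carried by genuinely non-manifold controlled Poincar\'e complexes $X_i\simeq M$, which at their own scale are not controlled homotopy equivalent to any manifold. You then need an inductive step that re-cuts and re-glues $X_i$ at a finer scale, using controlled surgery in the simply connected setting. This must produce $X_{i+1}$ with a $UV^1$ controlled homotopy equivalence $X_{i+1}\to X_i$, while preserving both the homotopy type and the class $k$.

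Only with that control in hand do the limit arguments of your Step~3 apply. Even there you assert convergence rather than prove it. Uniform local contractibility of the $X_i$ is not automatic. The ENR property comes from a limit theorem of Ferry for sequences whose bonding maps are increasingly fine controlled homotopy equivalences, which is exactly the data your Step~2 leaves unspecified.

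A minor point: the index is read off through $H_n(X;\mathbb{L})\to H_n(X;\mathbb{Z})\cong H^0(X;\mathbb{Z})$, not through $H_0(X;\mathbb{L})$.
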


\section{Separation by codimension-1 map depending on the self-intersection set }

In this section, we will see the first result of separation by codimension-1 map to generalized manifolds, Theorem \ref{teoballesteros}, which is an extension of Theorem 2.2 in \cite{Ballesteros}, from topological manifold to generalized manifold.\newline
\par The following result was proven in \cite{Ballesteros}.
\begin{lemma}\label{lemaball}
Consider the following commutative diagram of R-modules, where the rows are exact and $g$ is an isomorphism
$$\xymatrix{& A\ar[r]\ar[d]^{f}& B\ar[r]\ar[d]^{g}& C\ar[r]\ar[d]^{h}& 0 \\ D\ar[r]^{\lambda}& A'\ar[r] & B' \ar[r] & C' .}$$
Then, $ker(h)\simeq coker(f+\lambda)$, where $f+\lambda: A\oplus D\longrightarrow A'$ is the induced map.

\end{lemma}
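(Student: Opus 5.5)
Name the maps: write $\alpha:A\to B$, $\beta:B\to C$ for the top row and $\alpha':A'\to B'$, $\beta':B'\to C'$ for the bottom row. Exactness then says $\beta$ is surjective, $\ker\beta=\operatorname{im}\alpha$, $\ker\beta'=\operatorname{im}\alpha'$ and $\ker\alpha'=\operatorname{im}\lambda$. Commutativity says $g\alpha=\alpha' f$ and $h\beta=\beta' g$. The plan is to produce a surjection $\Phi:A'\to\ker h$ whose kernel is exactly $\operatorname{im}(f+\lambda)=f(A)+\lambda(D)$. The first isomorphism theorem then gives $\ker h\simeq A'/\operatorname{im}(f+\lambda)=\operatorname{coker}(f+\lambda)$.

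\emph{Step 1 (an isomorphism $\ker\beta'\to\ker h$).} Since $g$ is an isomorphism, $\beta g^{-1}:B'\to C$ makes sense. For $b'\in\ker\beta'$ we have $h\beta g^{-1}(b')=\beta' g g^{-1}(b')=\beta'(b')=0$, so $\beta g^{-1}$ restricts to a map $\psi:\ker\beta'\to\ker h$.

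To see $\psi$ is surjective, take $c\in\ker h$. Because $\beta$ is onto, choose $b\in B$ with $\beta(b)=c$. Then $\beta'(g(b))=h(c)=0$, so $g(b)\in\ker\beta'$ and $\psi(g(b))=c$.

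The kernel of $\psi$ consists of those $b'=g(b)$ with $\beta(b)=0$, that is, $b\in\operatorname{im}\alpha$. Hence $\ker\psi=g(\alpha(A))=\alpha'(f(A))$. This gives $\ker h\simeq \ker\beta'/\alpha' f(A)$.

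\emph{Step 2 (pulling back to $A'$).} Since $\ker\beta'=\operatorname{im}\alpha'$, the map $\alpha'$ surjects $A'$ onto $\ker\beta'$. Set $\Phi=\psi\circ\alpha':A'\to\ker h$, which is therefore surjective. Its kernel is $(\alpha')^{-1}(\alpha' f(A))=f(A)+\ker\alpha'$. By exactness at $A'$, $\ker\alpha'=\lambda(D)$. So $\ker\Phi=f(A)+\lambda(D)=\operatorname{im}(f+\lambda)$, which is what we wanted.

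This is a routine diagram chase and has no real obstacle. The only point needing care is Step 1: we must use the surjectivity of $\beta$ together with the invertibility of $g$ to identify $\ker h$ with the quotient $\ker\beta'/\alpha'f(A)$. Note that no exactness at $C'$ and no injectivity of $\alpha$ is used.
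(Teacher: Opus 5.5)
Your proof is correct and complete. The map $\psi=\beta g^{-1}|_{\ker\beta'}$ is a well-defined surjection onto $\ker h$ with kernel $g\alpha(A)=\alpha' f(A)$. Composing with the surjection $\alpha':A'\to\ker\beta'$ then gives a surjection $\Phi:A'\to\ker h$ with kernel $f(A)+\ker\alpha'=f(A)+\lambda(D)=\operatorname{im}(f+\lambda)$.

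The paper gives no proof of this lemma; it only cites \cite{Ballesteros}. Your diagram chase therefore supplies a self-contained argument where the paper has none. Building $\Phi$ out of $A'$, rather than a map $\ker h\to\operatorname{coker}(f+\lambda)$, also means no choices are made and no well-definedness check is needed.
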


\begin{theorem}\label{teoballesteros}
    Let $f:X\longrightarrow Y$ be a proper map from a connected generalized $n$-manifold to a connected generalized $(n+1)$-manifold with $H_1(Y;\mathbb{Z}_2)=0$ and let $A$ be the closure of the selfintersection set $A(f)= \{ x\in X : f^{-1}f(x)\neq x\}$. Suppose that $A\neq X$ and $Y-f(A)$ are connected. Then $\beta_0 (Y-f(X))= 2+ dim_{\mathbb{Z}_2}coker(i^{\ast}+f|_A^{\ast})$, where $i^{\ast}+f|_A^{\ast}: \check{H}_c^{n-1}(X;\mathbb{Z}_2)\oplus \check{H}_c^{n-1}(f(A);\mathbb{Z}_2) \longrightarrow \check{H}_c^{n-1}(A;\mathbb{Z}_2)$ is the induced map.
\begin{proof}
    To simplify the notation, we shall omit the coefficient group $\mathbb{Z}_2$ in all homology and cohomology groups.\newline
    \par Since $f$ is proper (and hence closed), $f(A)$, $f(X)$ are closed and we can consider the \v Cech cohomology of the pairs $(X,A)$, $(f(X),f(A))$ and get the following commutative diagram, where the rows are exact:
    $$\xymatrix{\ar[r] &\check{H}_c^{n-1}(f(X))\ar[r]\ar[d]^{(1)}& \check{H}_c^{n-1}(f(A))\ar[r]\ar[d]^{(2)}& \check{H}_c^{n}(f(X),f(A))\ar[r]\ar[d]^{(3)} &\\ \ar[r] &\check{H}_c^{n-1}(X)\ar[r]& \check{H}_c^{n-1}(A)\ar[r]& \check{H}_c^{n}(X,A)\ar[r]&}$$
    $$\xymatrix{\ar[r]&\check{H}_c^{n}(f(X))\ar[r]\ar[d]^{(4)}& \check{H}_c^{n}(f(A))\ar[r]\ar[d]^{(5)}& \check{H}_c^{n+1}(f(X),f(A))\ar[r]\ar[d]^{(6)} &\\ \ar[r]&\check{H}_c^{n}(X)\ar[r]& \check{H}_c^{n}(A)\ar[r]& \check{H}_c^{n+1}(X,A)\ar[r]&}$$
    But some of these cohomology groups are computed using the Alexander's duality:\\
    $\check{H}_c^n (X)\simeq H_0(X)\simeq \mathbb{Z}_2$, \\
    $\check{H}_c^n (f(X))\simeq H_1(Y,Y-f(X))\simeq \widetilde{H}_0(Y-f(X))$, 
    where the last isomorphism comes from the exact sequence of the pair $(Y,Y-f(X))$:\\
    $$\xymatrix{0=H_1(Y)\ar[r]& H_1(Y,Y-f(X))\ar[r]& \widetilde{H}_0(Y-f(X))\ar[r]& \widetilde{H}_0(Y)=0}.$$\\
    This gives a formula for the number of connected components of $Y-f(X)$: 
    \begin{equation}\label{eq}
    \beta_0(Y-f(X))= 1+ dim_{\mathbb{Z}_2}\check{H}_c^n(f(X)).
    \end{equation}
    \par We apply also the Alexander's duality to $A$ and $f(A)$:\\
    $\check{H}_c^{n}(A)\simeq H_0(X,X-A)=0,$\\
    $\check{H}_c^{n}(f(A))\simeq H_1(Y,Y-f(A))=0,$
    where the last equality comes from the exact sequence of the pair $(Y,Y-f(A))$:
    $$\xymatrix{0=H_1(Y)\ar[r]& H_1(Y,Y-f(A))\ar[r]& \widetilde{H}_0(Y-f(A))=0}.$$
    On the other hand, for Theorem 5 in \cite{Spanier} pag. 318, the maps $(3)$ and $(6)$ in the above diagram are isomorphisms. Then we can apply the Five Lemma to the maps $(2), \dots, (6)$ and deduce that $f^{\ast}:\check{H}_c^{n}(f(X))\longrightarrow \check{H}_c^{n}(X)$ is an epimorphism. Therefore $dim_{\mathbb{Z}_2}\check{H}_c^{n}(f(X))=1+dim_{\mathbb{Z}_2} ker(f^{\ast})$.\\
    \par But the above lemma \ref{lemaball} implies that $ker(f^{\ast})=coker(i^{\ast}+f|_A^{\ast})$, then $\beta_0 (Y-f(X))= 2+ dim_{\mathbb{Z}_2}coker(i^{\ast}+f|_A^{\ast})$.

\end{proof}    
\end{theorem}

We now consider some particular results depending on the closure of the self-intersection set $A$. If $A=\emptyset$, we obtain the following version of the Jordan–Brouwer Theorem.

\begin{corollary}[Jordan-Brouwer Theorem to generalized manifold]\label{jordan-brouwer}
    Let $f:X\longrightarrow Y$ be a proper embedding from a connected generalized $n$-manifold to a connected generalized $(n+1)$-manifold with $H_1(Y;\mathbb{Z}_2)=0$. Then the number of connected components of $Y-f(X)$ is 2.
    \begin{proof}
        The self-intersection set $A$ is empty, then $X\neq A$, $Y-f(A)$ is connected, $\check{H}_c^{n-1}(A;\mathbb{Z}_2)=0$ and $dim_{\mathbb{Z}_2}coker(i^{\ast}+f|_A^{\ast})=0$.
    \end{proof}
\end{corollary}

An other result is in the case where dim $A<n$ and $A$ is compact.

\begin{proposition}
    Let $f:X\longrightarrow Y$ be a proper map from a connected generalized $n$-manifold to a connected generalized $(n+1)$-manifold with $H_1(Y;\mathbb{Z}_2)=0$ and let $A$ be the closure of the selfintersection set $A(f)= \{ x\in X : f^{-1}f(x)\neq x\}$. Suppose that $dim\ A<n$ and $A$ is compact, then $Y-f(X)$ is disconnected.
    \begin{proof}
        As dim $A<n$ and $A$ is compact we have that $\check{H}_c^n(A)= \check{H}^n(A)=H^n(A)=0$, and hence $(5)$ in the diagram  $$\xymatrix{\ar[r] &\check{H}_c^{n-1}(f(X))\ar[r]\ar[d]^{(1)}& \check{H}_c^{n-1}(f(A))\ar[r]\ar[d]^{(2)}& \check{H}_c^{n}(f(X),f(A))\ar[r]\ar[d]^{(3)} &\\ \ar[r] &\check{H}_c^{n-1}(X)\ar[r]& \check{H}_c^{n-1}(A)\ar[r]& \check{H}_c^{n}(X,A)\ar[r]&}$$
    $$\xymatrix{\ar[r]&\check{H}_c^{n}(f(X))\ar[r]\ar[d]^{(4)}& \check{H}_c^{n}(f(A))\ar[r]\ar[d]^{(5)}& \check{H}_c^{n+1}(f(X),f(A))\ar[r]\ar[d]^{(6)} &\\ \ar[r]&\check{H}_c^{n}(X)\ar[r]& \check{H}_c^{n}(A)\ar[r]& \check{H}_c^{n+1}(X,A)\ar[r]&}$$
        is an epimorphism. Then the Five Lemma again to the maps $(2),\dots,(6)$ implies that $(4)$ is an epimorphism. Then we have that $\check{H}_c^n(f(X))\neq0$ and by equation \ref{eq}, $\beta_0(Y-f(X))\geq 2$.
    \end{proof}
\end{proposition}

\section{Separation by codimension-1 map depending on the primary obstruction to topological embedding}

In this section, we present the second separation result for codimension-1 maps into generalized manifolds, Theorem \ref{teoobstruccion}, which relies on the concept of the primary obstruction to topological embeddings as defined and studied in \cite{Biasi}\cite{Biasi1}\cite{Biasi2}\cite{Biasi3}\cite{Biasi4}. This result is analogous to the separation result by codimension-1 maps obtained by Biasi and Saeki in \cite{Biasi3} the context of smooth manifolds.

\par In the following of this section $R=\mathbb{Z}_2$, let $M$ and $N$ be generalized manifolds of dimensions $m$ and $n$, respectively, such that $k=n-m>0$ and $f:M\rightarrow N$ is a proper map. Let $A$ be the closure of the self-intersection set $A(f)= \{ x\in M : f^{-1}f(x)\neq x\}$. Let denote by $U_f\in H^k(N)$ the Poincar\'e dual of $f_\ast[M]\in H_m^c(N)$; in other words $f_\ast[M]=U_f\frown [N]$. Note that $f_\ast[M]$ is well-defined, since $f$ is a proper map.
\par Let the total Stiefel-Whitney class of $M$ and $N$ denoted by $w(M)\in H^\ast(M)$ and $w(N)\in H^\ast(N)$ respectively and let $\bar{w}(M)\in H^\ast(M)$; i.e. $\bar{w}(M)=w(M)^{-1}$.

\begin{definition}
    $w(f)=(f^\ast(w(N))\smile \bar{w}(M)$ is called total Stiefel-Whitney class of the stable normal bundle of $f$. We denote by $w_k(f)\in H^k(M)$ the degree $k$ term of $w(f)$, with is the $k$-th Stiefel-Witney class of stable normal bundle of $f$.
\end{definition}

\begin{definition}
   $ \theta(f)=(f^\ast U_f-w_k(f))\frown [M]\in H_{m-k}^c(M)$ is called primary obstruction to topological embedding.
\end{definition}
\begin{remark}
    This homology class is a proper homotopy invariant of
$f$ and has the property that, when $M$ and $N$ are generalized manifolds, if $f$ is properly homotopic to a embedding, then $\theta(f)$ vanishes, this was showed in \cite{Biasi}.
\end{remark}

Note that when $M$ is compact, $\theta(f)\in H_{m-k}(M)$, since we have $H_{m-k}(M)\simeq H_{m-k}^c(M)$.

Next three results are Theorem 3.1, Corollary 3.15 and Corollary 3.17 in \cite{Biasi}.

\begin{theorem}
    
 Let $f: M \rightarrow N$ be a proper map of an generalized $m$-manifold $M$ into an generalized $m+k$-manifold $N$ with $k>0$. Then $f_{*} \theta(f) \in H_{m-k}^{c}(N)$ always vanishes.
\end{theorem}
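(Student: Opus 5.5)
The plan is to compute $f_\ast\theta(f)$ directly from the definition, using the projection formula for cap products together with Poincaré duality on $M$ and $N$. The goal is to show that the two terms of $f_\ast\theta(f)$ are equal in $H_{m-k}^c(N)$; since $R=\mathbb{Z}_2$, their difference is then zero.

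\textbf{First term.} By definition $\theta(f)=f^\ast U_f\frown[M]-w_k(f)\frown[M]$. For the first summand, the projection formula $f_\ast(f^\ast\alpha\frown z)=\alpha\frown f_\ast z$ holds for proper maps with compactly supported homology. Applying it with $\alpha=U_f$ and $z=[M]$, and using $f_\ast[M]=U_f\frown[N]$, gives
$$f_\ast(f^\ast U_f\frown[M])=U_f\frown f_\ast[M]=U_f\frown(U_f\frown[N])=(U_f\smile U_f)\frown[N].$$
Now $U_f\smile U_f=Sq^k U_f$, since $U_f$ has degree $k$. So the first term is the Poincaré dual of $Sq^kU_f$.

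\textbf{Second term.} I would identify $f_\ast(w_k(f)\frown[M])$ with the same class. The tool is Wu's formula together with its Riemann–Roch type version, the Atiyah–Hirzebruch/Thom formula
$$f_\ast\big(w(f)\frown[M]\big)=\big(\mathrm{Sq}\,U_f\big)\frown[N],$$
where $\mathrm{Sq}$ is the total Steenrod square. This formula expresses the fact that the Umkehr map $f_!=D_N^{-1}f_\ast D_M$ satisfies $\mathrm{Sq}\, f_!(x)=f_!(\mathrm{Sq}(x)\smile w(f))$. For generalized manifolds it follows from the corresponding statements on each side:
\begin{itemize}
\item Wu's formula $w(M)=\mathrm{Sq}(v_M)$, valid on any $\mathbb{Z}_2$-Poincaré duality space, since it only uses that $Sq$ commutes with duality via the Wu class;
\item the analogous formula on $N$.
\end{itemize}
To prove $\mathrm{Sq}\, f_!(x)=f_!(\mathrm{Sq}(x)\smile w(f))$, I would pair both sides against compactly supported classes, using $\langle \mathrm{Sq}(v)\smile y,[N]\rangle=\langle \chi\ldots\rangle$ and naturality of $\mathrm{Sq}$ under $f^\ast$. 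This is the standard argument, and it uses only Poincaré duality, which holds here by the duality theorem stated earlier.

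\textbf{Combining.} Taking $x=1$ and reading off the degree-$k$ component gives $f_\ast(w_k(f)\frown[M])=Sq^kU_f\frown[N]$. This equals the first term, so $f_\ast\theta(f)=0$.

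\textbf{Main obstacle.} The hard step is justifying Wu/Thom formulas in the generalized-manifold, noncompact (proper, compact supports) setting. In particular one must make sense of the Stiefel–Whitney classes themselves, which the paper defines through $w(M)$; I would take these as Wu-defined classes $w=\mathrm{Sq}(v)$. I would handle the noncompactness by working with $H^\ast_c$ and the pairing between $H^\ast$ and $H^\ast_c$, checking that each duality step respects supports because $f$ is proper.
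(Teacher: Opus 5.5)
The paper does not prove this statement. It quotes it as Theorem~3.1 of \cite{Biasi}, so there is no internal argument to compare against. Judged on its own, your route is correct and is the standard one. First, $f_\ast(f^\ast U_f\frown[M])=(U_f\smile U_f)\frown[N]=Sq^kU_f\frown[N]$. Second, the Riemann--Roch identity $\mathrm{Sq}\,f_!(x)=f_!(\mathrm{Sq}(x)\smile w(f))$ at $x=1$ identifies the other term with the same class. The paper never defines $w(M)$ for generalized manifolds, so taking $w:=\mathrm{Sq}(v)$ as you propose is the right move; after that everything is formal.

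Two points need tightening.

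\textbf{Supports.} $H^c_\ast$ in the paper is homology with closed (locally finite) supports, not compactly supported homology. That is where $[M]$ lives when $M$ is noncompact, and $f_\ast[M]$ is defined because $f$ is proper.
The projection formula you need is $f_\ast(f^\ast\alpha\frown z)=\alpha\frown f_\ast z$ for $\alpha\in H^\ast(N)$ and $z$ a closed-support class. The test classes are $y\in H^\ast_c(N)$, and $f^\ast y\in H^\ast_c(M)$ by properness. The pairing $H^j(N)\times H^{n-j}_c(N)\to\mathbb{Z}_2$ is perfect by the stated duality $H^{n-j}_c(N)\cong H_j(N)$ together with universal coefficients over $\mathbb{Z}_2$.

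\textbf{The Riemann--Roch step.} The step you abbreviate as ``$\langle\chi\ldots\rangle$'' is the entire content and should be written out.
Define $v_N$ by $\langle v_N\smile b,[N]\rangle=\langle\mathrm{Sq}(b),[N]\rangle$ for all $b\in H^\ast_c(N)$, and define $v_M$ similarly. Fix $y\in H^\ast_c(N)$ and put $c=\chi(\mathrm{Sq})(f^\ast y)\in H^\ast_c(M)$. Using $\mathrm{Sq}\circ\chi(\mathrm{Sq})=1$, the Cartan formula, naturality and the projection formula, you get
\[
\langle y\smile\mathrm{Sq}(f_!x),[N]\rangle=\langle\mathrm{Sq}\bigl(\chi(\mathrm{Sq})y\smile f_!x\bigr),[N]\rangle=\langle v_N\smile\chi(\mathrm{Sq})y\smile f_!x,[N]\rangle=\langle f^\ast v_N\smile c\smile x,[M]\rangle,
\]
while
\[
\langle y\smile f_!\bigl(\mathrm{Sq}(x)\smile w(f)\bigr),[N]\rangle=\langle\mathrm{Sq}(c)\smile\mathrm{Sq}(x)\smile\mathrm{Sq}(f^\ast v_N)\smile\bar w(M),[M]\rangle=\langle\mathrm{Sq}(c\smile x\smile f^\ast v_N)\smile\bar w(M),[M]\rangle .
\]
These agree because $\bar w(M)=\mathrm{Sq}(v_M)^{-1}=\mathrm{Sq}(v_M^{-1})$. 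The inverse exists since $v_M$ is $1$ plus positive-degree terms and $H^{>m}(M)=0$. Hence for every $z\in H^\ast_c(M)$,
\[
\langle\mathrm{Sq}(z)\smile\bar w(M),[M]\rangle=\langle\mathrm{Sq}(z\smile v_M^{-1}),[M]\rangle=\langle v_M\smile z\smile v_M^{-1},[M]\rangle=\langle z,[M]\rangle .
\]
This is the only place the Wu class of $M$ enters, and it is what makes $\bar w(M)$ appear in $w(f)$.
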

\begin{corollary}\label{coro3.15}
 Let $f: M \rightarrow N$ be a map from an compact generalized $m$-manifold $M$ into an generalized $m+k$-manifold $N$ with $k>0$. Set $B=f(A)$.
Then there exists an element $\mu \in \check{H}_{m-k}(A)$ such that $j_{\ast} \mu=\theta(f) \in \check{H}_{m-k}(M)=H_{m-k}(M)$ and $\left(f|_{A}\right)_{\ast}(\mu)=0 \in \check{H}_{m-k}(B)$, where $j: A \rightarrow M$ is the inclusion map. (When $A=\phi$, we regard $\check{H}_{m-k}(A)=0=\check{H}_{m-k}(B)$ )
\end{corollary}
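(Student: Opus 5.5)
The plan is to reduce Corollary \ref{coro3.15} to the vanishing theorem just stated ($f_\ast\theta(f)=0$), which we may assume. The link is to show that $\theta(f)$ is \emph{supported on} $A$: away from the self-intersection set, $f$ is injective, and the obstruction vanishes locally there. Throughout, $M$ is compact, so $\check H$, $H$ and $H^c$ agree on $M$ and on the closed sets $A$ and $B=f(A)$. Continuity of \v Cech homology lets us pass to limits over neighborhoods. If $A=\emptyset$, $f$ is an embedding, $\theta(f)=0$ by the Remark, and we take $\mu=0$.

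\textbf{Step 1: lift $\theta(f)$ to $A$.} Consider the exact sequence of the pair in \v Cech homology,
$$\check H_{m-k}(A)\xrightarrow{j_\ast}\check H_{m-k}(M)\to \check H_{m-k}(M,A).$$
It suffices to show that $\theta(f)$ maps to zero in $\check H_{m-k}(M,A)$. Using Poincar\'e--Alexander duality on $M$, one has $\check H_{m-k}(M,A)\cong H^{k}(M-A)$ (cohomology of the open set, up to the compact-support conventions). So the claim is equivalent to the vanishing of the restriction of $f^\ast U_f-w_k(f)$ to $M-A$.

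\textbf{Step 2: local vanishing off $A$.} On $M-A$ the map $f$ is injective. Moreover, $f(M-A)$ is disjoint from $B$, so $f|_{M-A}\colon M-A\to N-B$ is a proper embedding. By naturality of $U_f$ and of the Stiefel--Whitney classes under restriction to open sets, the restricted class is the obstruction $\theta(f|_{M-A})$. This vanishes by the Remark, since the map is itself an embedding. This yields a preimage $\mu_0\in\check H_{m-k}(A)$ with $j_\ast\mu_0=\theta(f)$.

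\textbf{Step 3: arrange $(f|_A)_\ast\mu=0$.} Look at the analogous sequence for $(N,B)$ together with the map of pairs $f\colon(M,A)\to(N,B)$. The theorem gives $i_\ast(f|_A)_\ast\mu_0=f_\ast\theta(f)=0$ in $H_{m-k}(N)$, so $(f|_A)_\ast\mu_0$ comes from the boundary $\partial\colon \check H_{m-k+1}(N,B)\to\check H_{m-k}(B)$. Since $f$ restricts to a homeomorphism $M-A\to f(M)-B$, an excision argument in \v Cech homology shows that $\check H_\ast(M,A)\to\check H_\ast(f(M),B)$ is an isomorphism; this is the same fact from \cite{Spanier} used for the maps $(3)$ and $(6)$ in Theorem \ref{teoballesteros}. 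We would then correct $\mu_0$ by $\partial$ of a class in $\check H_{m-k+1}(M,A)$, which does not change $j_\ast\mu_0$. To do this we must show that the relevant element of $\check H_{m-k+1}(N,B)$ can be chosen to come from $(f(M),B)$. For this I would redo the vanishing argument of the theorem relative to $f(M)$, i.e.\ prove $f_\ast\theta(f)=0$ already in $\check H_{m-k}(f(M))$, using that the duality defining $U_f$ factors through a neighborhood of $f(M)$.

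\textbf{Main obstacle.} Step 2 requires a precise localization: the naturality of $U_f$ and $w_k$ for the restriction to the open set $M-A\subset f^{-1}(N-B)$, including the identification of the Poincar\'e dual of $f_\ast[M]$ restricted to $N-B$ with $U_{f|}$. The refinement in Step 3, namely the vanishing in $f(M)$ rather than $N$, is the other delicate point. If it fails, I would instead take $\mu$ directly as the image of $\theta(f)$ under the boundary-type map defined via the isomorphism $\check H(M,A)\cong\check H(f(M),B)$, and verify both properties by diagram chasing.
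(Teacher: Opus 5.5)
The paper gives no proof of this statement; it quotes it from \cite{Biasi}. So I assess your argument on its own terms. Steps 1 and 2 are correct. Since $f^{-1}(B)=A$, the restriction $f|\colon M-A\to N-B$ is proper and injective, hence a closed embedding. Naturality then gives $\theta(f)|_{M-A}=\theta(f|)=0$, and indeed $\check{H}_{m-k}(M,A)\cong H^c_{m-k}(M-A)\cong H^k(M-A)$.

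\textbf{The gap is in Step 3.} Everything rests on the refined vanishing $\bar f_\ast\theta(f)=0$ in $\check{H}_{m-k}(f(M))$, which you leave unproved. This is not a technicality; it is the entire content of the corollary. The statement forces it, since $\bar f_\ast\theta(f)=\bar f_\ast j_\ast\mu=(j'')_\ast(f|_A)_\ast\mu=0$. Conversely, suppose you have it. The exact sequence $\check{H}_{m-k}(A)\stackrel{\alpha}{\rightarrow}\check{H}_{m-k}(B)\oplus\check{H}_{m-k}(M)\rightarrow\check{H}_{m-k}(f(M))$, which comes from the relative isomorphism $\check{H}_\ast(M,A)\cong\check{H}_\ast(f(M),B)$ you invoke (as in Lemma~\ref{lema6.3}), then gives $(0,\theta(f))=\alpha(\mu)$ for some $\mu$ at once, and that is exactly the claim. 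Your correction of $\mu_0$ by $\partial\nu$ is just the proof of exactness at that spot. So Steps 1 and 2 become superfluous. Your fallback also cannot work: if the refined vanishing failed, the corollary itself would be false.

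Nor does the refined vanishing follow from the theorem as quoted. For $N=\mathbb{R}^{m+k}$ with $m>k$, both $H_{m-k}(N)$ and $H^c_{m-k}(N)$ are zero, so the theorem says nothing while the corollary does. You could apply the theorem to every open neighbourhood $U$ of $f(M)$; this is allowed, since $\theta(f\colon M\to U)=\theta(f)$ by the naturality in your Step 2. But that only gives vanishing in the closed-support groups $H^c_{m-k}(U)$, which is too weak to pass to the \v{C}ech limit. For example, take $S^q\subset\mathbb{R}^n$ with $0<q<n$. The class $[S^q]$ dies in $H^c_q(U)$ for every such $U$: with $\mathbb{Z}_2$ coefficients, classes in $H^c_q(U)$ are detected by compactly supported cohomology of $U$, and its image in $H^q(U)$ factors through $H^q(\mathbb{R}^n)=0$.

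\textbf{The missing idea} is to run the theorem's Steenrod-square argument relative to $f(M)$. Let $\tilde U_f\in H^k(N,N-f(M))\cong\check{H}_m(f(M))$ be the Alexander dual of $\bar f_\ast[M]$; it maps to $U_f$. Then $\bar f_\ast(f^\ast U_f\frown[M])=(\tilde U_f\smile\tilde U_f)\frown[N]=Sq^k(\tilde U_f)\frown[N]$ in $\check{H}_{m-k}(f(M))$. You must then prove the Wu-type identity $Sq^k(\tilde U_f)\frown[N]=\bar f_\ast(w_k(f)\frown[M])$ in this group, using the Wu classes of $M$ and $N$. With that in hand, the exact sequence above finishes the proof in one line.
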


\begin{corollary}\label{corollary3.17}
    Let $f:M\rightarrow N$ be a map from an compact generalized $m$-manifold $M$ into an generalized $(m+k)$-manifold with $k\geq 0$. If the topological dimension of $A$ is strictly less than $m-k$, then $\theta(f)\in H_{m-k}(M)$ vanishes.
\end{corollary}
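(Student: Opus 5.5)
The statement to prove is Corollary \ref{corollary3.17}: if $\dim A<m-k$, then $\theta(f)$ vanishes. The plan is to deduce it from Corollary \ref{coro3.15}, which gives a \v Cech class $\mu\in\check{H}_{m-k}(A)$ with $j_\ast\mu=\theta(f)$. It then suffices to show that $\check{H}_{m-k}(A)=0$ whenever the topological dimension of $A$ is strictly less than $m-k$. Once that vanishing is established, $\mu=0$, and hence $\theta(f)=j_\ast\mu=0$ in $H_{m-k}(M)$.

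First, I record that $A$ is compact. Indeed, $A$ is the closure of $A(f)$ inside the compact space $M$, so it is a closed subset of a compact metrizable space (an ENR). Hence $A$ is a compact metric space of covering dimension $d<m-k$.

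Next I need the vanishing of \v Cech homology above the dimension. For a compact metric space of covering dimension $d$, every open cover has a refinement whose nerve has dimension at most $d$. \v Cech homology is the inverse limit over nerves of finite open covers, and we may pass to the cofinal family of covers with nerves of dimension $\le d$. The simplicial homology of each such nerve vanishes in degrees $>d$, so the inverse limit $\check{H}_q(A;\mathbb{Z}_2)$ vanishes for $q>d$. With $\mathbb{Z}_2$ coefficients the groups of finite nerves are finite, so there are no $\varprojlim^1$ subtleties in any case. Applying this with $q=m-k>d$ gives $\check{H}_{m-k}(A)=0$.

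Finally, the case $A=\emptyset$ is covered by the convention in Corollary \ref{coro3.15}. The case $m-k<0$ is trivial, since then the relevant homology group is zero.

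The main obstacle is making the dimension-theoretic vanishing rigorous, namely the cofinality of covers with nerves of dimension $\le d$. That is standard Hurewicz–Wallman/Eilenberg–Steenrod material, which I would cite rather than reprove.
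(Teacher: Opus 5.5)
Your proposal is correct and is essentially the intended argument. The paper gives no proof of its own: it quotes the result from \cite{Biasi} immediately after Corollary \ref{coro3.15}, and your deduction is the natural one from that corollary, namely $\theta(f)=j_\ast\mu$ with $\mu\in\check{H}_{m-k}(A;\mathbb{Z}_2)=0$, since \v Cech homology of a compact space vanishes above its covering dimension. The only caveat is that Corollary \ref{coro3.15}, like the definition of $\theta(f)$, is stated for $k>0$, so your reduction does not literally cover the borderline case $k=0$ that the statement allows.
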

\begin{lemma}\label{lema6.3}
 Let $f:M\rightarrow N$ be a map from an compact generalized $m$-manifold $M$ into an generalized $(m+k)$-manifold with $k\geq 0$. Suppose that $\mu \in \check{H}_{m-k}(A)$ is not zero. Then if $\theta (f)$ vanishes, $\bar{f}_\ast : \check{H}_{m-k+1}(M) \rightarrow \check{H}_{m-k+1}(f(M))$ is not subjective.
\begin{proof}
    Suppose that $f$ is not an embedding; i.e., $A\neq \emptyset$. Consider the following diagram of \v Cech homologies with exact rows:
    $$\xymatrix{\cdots\ar[r]& \check{H}_i(A)\ar[r]\ar[d] & \check{H}_i(M)\ar[r]\ar[d] & \check{H}_i(M,A)\ar[r]\ar[d]& \check{H}_{i-1}(A)\ar[r]\ar[d] &\cdots \\ \cdots\ar[r] &\check{H}_i(B)\ar[r] &\check{H}_i(f(M))\ar[r] &\check{H}_i(f(M),B)\ar[r] &\check{H}_{i-1}(B)\ar[r]& \cdots}$$
    where the vertical homomorphisms are induced by $f$. Note that the homomorphism $f_\ast :\check{H}_i(M,A)\rightarrow \check{H}_i(f(M),B) $ is an isomorphism by excision. Then it is not difficult to extract the following exact sequence:
    
    $$\check{H}_{m-k+1}(A) \stackrel{\alpha}{\rightarrow} \check{H}_{m-k+1}(B) \oplus \check{H}_{m-k+1}(M) \stackrel{((j'')_\ast\oplus\bar{f}_{\ast})}{\rightarrow} \check{H}_{m-k+1}(f(M))$$
    $${\rightarrow}\check{H}_{m-k}(A) \stackrel{\alpha}{\rightarrow} \check{H}_{m-k}(B) \oplus \check{H}_{m-k}(M)$$
where $j'': B \rightarrow f(M)$ is the inclusion map, $\alpha=((f|_{A})_\ast,i_\ast)$ where $i:A\rightarrow M$ is the inclusion map. By corollary \ref{coro3.15} and $\theta(f)=0$, then $\ker(\alpha)\neq 0$. Therefore $\bar{f}_\ast :\check{H}_{m-k+1}(M)\rightarrow \check{H}_{m-k+1}(f(M))$ can not be subjective.
\end{proof}
\end{lemma}
\begin{theorem}\label{teoobstruccion}
    
 Let $f: M \rightarrow N$ be a map from a compact generalized $m$-manifold $M$ into a generalized $(m+1)$-manifold $N$ with $H_{1}(N)=0$. If $A \neq M,\ \mu \neq 0$ and $w_{1}(f)=0$, then the number of connected components of $N-f(M)$ is greater than or equal to three.

\begin{proof}
    
 By hypothesis $A \neq M$, then $0=H_{0}(M, M-A)$, by Alexander's duality $\check{H}_{c}^{m}(A) \simeq 0$, how $M$ is compact hence $A$ compact, $H^m(A)=\check{H}^{m}(A)=\check{H}_{c}^{m}(A)=0$ and by universal coefficient theorem $0=\check{H}_{m}(A)=H_{m}(A)$.

Consider the following exact sequence of the pair $(N, N-f(M))$
$$
H_{1}(N) \rightarrow H_{1}(N, N-f(M)) \rightarrow \tilde{H}_{0}(N-f(M)) \rightarrow \tilde{H}_{0}(N)
$$
by hypothesis $H_{1}(N)=0=\tilde{H}_{0}(N)$, then $H_{1}(N, N-f(M))$ is isomorphic to $\widetilde{H}_{0}(N-f(M))$.
On the other hand $H_{1}(N, N-f(M))$ is isomorphic to $\check{H}^{m}(f(M))$ by Alexander's Duality. Thus we see that $\check{H}^{m}(f(M)) \simeq H^{m}(f(M))$ (because $f(M)$ is compact) which is isomorphic to $H_{m}(f(M))$ by universal coefficient theorem, then $\beta_{0}(N-f(M))=\operatorname{dim} H_{m}(f(M))+1$.

On the other hand we have the following exact sequence 
$$\check{H}_{m}(A) \stackrel{\alpha}{\rightarrow} \check{H}_{m}(B) \oplus \check{H}_{m}(M) \stackrel{((j'')_\ast\oplus\bar{f}_{\ast})}{\rightarrow} \check{H}_{m}(f(M))$$

where $j'': B \rightarrow f(M)$ is the inclusion map, $\alpha=((f|_{A})_\ast,i_\ast)$ where $i:A\rightarrow M$ is the inclusion map.

As $\check{H}_{m}(A)=0$, then $\tilde{f}_{\ast}: \check{H}_{m}(M) \rightarrow \check{H}_{m}(f(M))$ is a monomorphism. By our hypothesis $w_{1}(f)=0$ and $f^{\ast} U_{f}=0$ because $H_{1}(N)=0$ then $\theta(f)=0$ and by hypothesis $\mu\neq 0$. Therefore for lemma \ref{lema6.3} $f_{\ast}: \check{H}_{m}(M) \rightarrow \check{H}_{m}(f(M))$ is not subjective , then $\operatorname{dim} \check{H}_m(f(M)) \geqslant \operatorname{dim} \check{H}_{m}(M)+1=2 $, consequently $ \beta_{0}(N-f(M)) \geqslant 3$
\end{proof}
\end{theorem}

\end{document}